\numberwithin{equation}{section}
\newtheorem{theorem}{Theorem}[section]
\newtheorem{corollary}[theorem]{Corollary}
\newtheorem{lemma}[theorem]{Lemma}
\theoremstyle{definition}
\newtheorem{definition}[theorem]{Definition}
\theoremstyle{remark}
\newtheorem{remark}[theorem]{Remark}
\numberwithin{equation}{section}
\newtheorem{example}{Example}
\begin{document}
\title [Coefficient estimate of bi-Bazilevi\u{c} function...]{Coefficient estimate of bi-Bazilevi\u{c} function\\ of complex order based on quasi subordination\\ involving Srivastava-Attiya operator}
\author{G. Murugusundaramoorthy}
\maketitle
\begin{center}
School of Advanced Sciences, VIT University,\\
Vellore 632014, Tamilnadu, India\\
{\tt E-mail:} gmsmoorthy@yahoo.com
\end{center}

\begin{abstract}
In this paper, we introduce and investigate a new subclass of the function class
$\Sigma$ of bi-univalent functions  defined in the open unit disk, which are associated
with the Hurwitz-Lerch zeta function, satisfying subordinate conditions. Furthermore,
we find estimates on the Taylor-Maclaurin coefficients $|a_2|$ and $|a_3|$ for functions
in this new subclass. Several (known or new) consequences of the results are also pointed out.\
\\
{\bf 2010 Mathematics Subject Classification.} Primary 30C45. \
\\
{\bf Key Words and Phrases.} Analytic functions; Univalent functions; Bi-univalent functions;
Bi-starlike and bi-convex functions; Bi-Bazilevi\u{c} functions; Hurwitz-Lerch zeta function;
Jung-Kim-Srivastava integral operator; Libera-Bernardi integral operator.
\end{abstract}

\maketitle

\section{Introduction, Definitions and Preliminaries}
Let $\mathcal{A}$ denote the class of
functions of the form:
\begin{equation}\label{Int-e1}
f(z)=z+\sum\limits_{n=2}^{\infty}a_nz^n,
\end{equation}
which are analytic in the open unit disk
$$\triangle=\{z: z \in
\mathbb{C}\quad {\rm and}\quad |z|<1\}.$$Further, by $\mathcal{S}$
we shall denote the class of all functions in $\mathcal{A}$ which are univalent in $\triangle.$
Some of the important and well-investigated subclasses of the univalent function class $\mathcal{S}$
include (for example) the class $\mathcal{S}^*(\alpha)$ of starlike functions of order $\alpha$
in $\triangle$ and the class $\mathcal{K}(\alpha)$ of convex functions of order $\alpha$ in
$\triangle.$
\par The convolution or Hadamard product of two functions $f,h\in \mathcal{A}$ is denoted by
$f\ast h$ and is defined as
\begin{equation}
(f\ast h)(z)=z+\sum\limits_{n=2}^{\infty }a_{n}b_{n}z^{n},
\end{equation}
where $f(z)$ is given by (\ref{Int-e1}) and $h(z)=z+\sum\limits_{n=2}^{\infty }b_{n}z^{n}.$

\par We recall a general Hurwitz-Lerch Zeta function $\Phi(z, s, a)$ defined by (see \cite{sri4})
\begin{equation}\label{---}
\Phi(z, s, a) := \sum\limits_{k = 0}^{\infty} \frac{z^{k}}{(k + a)^{s}} \quad (a \in \mathbb{C} \setminus
\{   0, -1, -2, \ldots   \}; s \in \mathbb{C}, \mathfrak{R}(s) > 1 \ \mbox{and} \ |z| < 1).
\end{equation}
Several interesting properties and characteristics of the Hurwitz-Lerch zeta function
$\Phi(z, s, a)$ can be found in the recent investigations by Choi and Srivastava \cite{choi},
Ferreira and Lopez \cite{Ferr}, Garg et al \cite{Garg},  Lin et al \cite{Lin2}
and others.

For the class $\mathcal{A},$ Srivastava and Attiya \cite{sri3} (see also Raducanu and Srivastava \cite{Radu}
and Prajapat and Goyal \cite{Praj}) introduced and investigated linear operator:
\begin{equation*}
\mathcal{J}_{\mu}^ b : \mathcal{A}   \longrightarrow   \mathcal{A}
\end{equation*}
defined interms of the Hadamard product ( or convolution ) by
\begin{equation}\label{ch9_1.6}
\mathcal{J}_\mu^b f(z)  = (\mathcal{G}_{\mu, b} \ast f)(z) \quad (z \in \triangle; \ b \in \mathbb{C} \setminus
\{   0, -1, -2, \ldots   \};\, \mu \in \mathbb{C}; \, f \in \mathcal{A} ),
\end{equation}
where, for convenience.
\begin{equation}\label{ch9_1.7}
\mathcal{G}_{\mu}^ b(z)  = (1 + b)^{\mu} [\Phi(z, \mu, b) - b^{-\mu}].
\end{equation}
It is easy to observe from ( given earlier by \cite{Praj}, \cite{Radu}) \eqref{Int-e1}, \eqref{ch9_1.6} and \eqref{ch9_1.7} that
\begin{equation}
\mathcal{J}_\mu^b f(z)  = z + \sum\limits_{k = 2}^{\infty} \Theta_{k} a_{k} z^{k},
\end{equation}
where \begin{equation}\Theta_{k} = \left|\left( \frac{1 + b}{k + b} \right)^{\mu}\right|\end{equation} and ( throughout this paper unless
otherwise mentioned ) the parameters $\mu, b$ are considered as $\mu \in \mathbb{C}$ and
$b \in \mathbb{C} \setminus \{   0, -1, -2, \ldots   \}.$

We note that
\begin{itemize}
  \item For $\mu = 1$ and $b = \nu (\nu > -1)$ generalized Libera-Bernardi integral operator \cite{Reddy}
        \begin{eqnarray*}
          \mathcal{J}_1^ \nu f(z) &=& \frac{1 + \nu}{z^{\nu}}  \int\limits_{0}^{z} t^{\nu - 1} f(t) dt \\
           &=& z + \sum\limits_{k = 2}^{\infty} \left(  \frac{\nu + 1}{k + \nu}  \right) a_{k} z^{k} = \mathcal{L}_{\nu} f(z).
        \end{eqnarray*}

  \item For $\mu = \sigma\, (\sigma > 0)$ and $b = 1,$ Jung-Kim-Srivastava integral operator \cite{Jung}
        \begin{eqnarray*}
          \mathcal{J}_\sigma^ 1 f(z) &=&  \frac{2^{\sigma}}{z \Gamma (\sigma)} \int\limits_{0}^{z}
                                \left( \log \left( \frac{z}{t} \right) \right)^{\sigma - 1} f(t) dt\\
           &=&  z + \sum\limits_{k = 2}^{\infty} \left(  \frac{z}{k + 1}  \right)^{\sigma} a_{k} z^{k} = \mathcal{I}_{\sigma} f(z)
        \end{eqnarray*}
\end{itemize}
closely related to some multiplier transformations studied by Flett \cite{Flet}.

\par
An analytic function $f(z)$ is quasi-subordinate to an
analytic function $h(z),$ in the open unit disk if there exist analytic functions $\phi$ and $w,$ with $w(0)=0$ such
that $|\phi(z)|<1,~~| w(z)|< 1$ and $f(z)=\phi(z)h[w(z)].$ Then we write $f(z) \prec_{\widetilde{q}} h(z).$ If $\phi(z)=1,$ then the
quasi-subordination reduces to the subordination. Also,if $w(z)= z$ then $f(z)= \phi(z)h(z)$ and in this case
we say that $f(z)$ is majorized by $h(z)$ and it is written as $f(z) << h(z)$ in $\triangle.$ Hence it is obvious that
quasi-subordination is the generalization of subordination as well as majorization.It is unfortunate that
the concept quasi-subordination is so for an underlying concept in the area of complex function theory
al though it deserves much attention as it unifies the concept
of both subordination and majorization.
\par Through out this paper it is assumed that $\phi$ is analytic in $\triangle$ with $\phi(0)= 1$ and let
\begin{equation}\label{c7e3}
\phi(z) = 1 + B_{1} z + B_{2} z^2 + B_{3} z^3 + \cdots,\ \ \ (B_{1}> 0).
\end{equation}
also let
\begin{equation}\label{psi}
\psi(z) = C_0 + C_{1} z + C_{2} z^2 + C_{3} z^3 + \cdots, (|\psi(z)<1|) ~~z\in \triangle
\end{equation}

\section{Bi-Univalent function Class $\Sigma$}
It is well known that every function $f\in \mathcal{S}$ has an inverse $f^{-1},$
defined by
\[
f^{-1}(f(z))=z \qquad (z \in \triangle)
\]
and
\[
f(f^{-1}(w))=w \qquad \left(|w| < r_0(f);\,\, r_0(f) \geq \frac{1}{4}\right),
\]
where
\begin{equation}\label{g-e}
g(w)= f^{-1}(w) = w - a_2w^2 + (2a_2^2-a_3)w^3 -
(5a_2^3-5a_2a_3+a_4)w^4+\cdots .
\end{equation}
\par A function $f \in \mathcal{A}$ is said to be bi-univalent in $\triangle$ if both $f(z)$ and
$f^{-1}(z)$ are univalent in $\triangle.$ Let $\Sigma$ denote the class of bi-univalent functions
in $\triangle$ given by (\ref{Int-e1}).

\par Recently there has been triggering interest to study bi-univalent function class $\Sigma$ and
obtained non-sharp coefficient estimates on the first two coefficients $|a_2|$ and $|a_3|$ of
(\ref{Int-e1}). But the coefficient problem for each of the following Taylor-Maclaurin coefficients:
$$|a_n|\qquad (n\in\mathbb{N}\setminus\{1, 2, 3\};\;\;\mathbb{N}:=\{1,2,3,\cdots\}$$
is still an open problem(see\cite{Branna1970,Bran-1979,Bran1985,Lewin,Netany,Taha1981}). Many
researchers (see\cite{BAF-MKA,haya,GMS-TJ,Xu-HMS-AML,HMS-AKM-PG}) have recently
introduced and investigated several interesting subclasses of the bi-univalent function class
$\Sigma$ and they have found non-sharp estimates on the first two Taylor-Maclaurin coefficients
$|a_2|$ and $|a_3|.$

\par Several authors have discussed various subfamilies of Bazilevi\u{c} functions of type $\lambda$
from various perspective. They discussed it from the perspective of convexity, inclusion theorem,
radii of starlikeness and convexity boundary rotational problem, subordination just to mention few.
The most amazing thing is that, it is difficult to see any of this authors discussing the coefficient
inequalities, and coefficient bounds of these subfamilies of Bazilevi\u{c} function most especially
when the parameter $\lambda$ is greater than 1 ($\lambda\in \mathbb{R}$ ). Motivated by the earlier work
of Deniz\cite{DEN} in the present paper we introduce new families of Bazilevi\u{c} functions of complex
order \cite{Noor} of the function class $\Sigma,$ involving Hurwitz-Lerch zeta function, and find
estimates on the coefficients $|a_2| $ and $|a_3|$ for functions in the new subclasses of function
class $\Sigma.$ Several related classes are also considered, and connection to earlier known results
are made.

\begin{definition}
A function $f\in \Sigma $ given by (\ref{Int-e1}) is said to be in the class
$\mathcal{B}^{\mu, b}_{\Sigma}(\gamma, \lambda, \phi)$ if the following conditions are satisfied:
\begin{equation}\label{Defi-1-e1}
 \frac{1}{\gamma}\left(\frac{z^{1-\lambda}(\mathcal{J}_\mu^bf(z))'}{
[\mathcal{J}_\mu^bf(z)]^{1-\lambda}} - 1\right ) \prec_{\widetilde{q}} (\phi(z)-1)
\end{equation}
and
\begin{equation}\label{Defi-1-e2}
 \frac{1}{\gamma} \left(\frac{w^{1-\lambda}(\mathcal{J}_\mu^bg(w))'}{
[\mathcal{J}_\mu^bg(w)]^{1-\lambda}} - 1 \right ) \prec_{\widetilde{q}} (\phi(w)-1)
\end{equation}
where  $\gamma \in \mathbb{C} \setminus \{ 0 \};\lambda \geq 0;z,w \in \triangle$ and the
function $g$ is given by(\ref{g-e}).

\end{definition}
\par On specializing the parameters $\lambda$  one can define the various new subclasses
of $\Sigma$ associated with Hurwitz-Lerch zeta function as illustrated in the following examples.

\begin{example}\label{exam1}
For $\lambda =0$ and a function $f\in \Sigma ,$ given by (\ref{Int-e1}) is said to be in the class
$\mathcal{S}^{\mu, b}_{\Sigma}(\gamma, \phi)$ if the following conditions are satisfied:
\begin{equation}
 \frac{1}{\gamma} \left(\frac{z(\mathcal{J}_\mu^b f(z))'}{
\mathcal{J}_\mu^b f(z)}-1 \right ) \prec_{\widetilde{q}}  (\phi(z)-1)
\end{equation}
and
\begin{equation}
 \frac{1}{\gamma} \left(\frac{w(\mathcal{J}_\mu^b g(w))'}{
\mathcal{J}_\mu^b g(w)}-1 \right ) \prec_{\widetilde{q}}  (\phi(w)-1)
\end{equation}
where $\gamma \in \mathbb{C} \setminus \{ 0 \}; z,w \in \triangle$ and the function $g$ is given
by(\ref{g-e}).
\end{example}

\begin{example}\label{exam2}
For $\lambda = 1$ and a function $f\in \Sigma, $ given by (\ref{Int-e1}) is said to be in the
class $\mathcal{H}^{\mu,b}_{\Sigma}(\gamma,\phi)$ if the following conditions are satisfied:
\begin{equation}
 \frac{1}{\gamma} \left(\mathcal{J}_\mu^b f(z))' - 1 \right) \prec_{\widetilde{q}}  (\phi(z)-1)
\end{equation}
and
\begin{equation}
 \frac{1}{\gamma} \left((\mathcal{J}_\mu^b g(w))' - 1 \right) \prec_{\widetilde{q}}  (\phi(w)-1)
\end{equation}
where $\gamma \in \mathbb{C} \setminus \{ 0 \};z,w \in \triangle$ and the function $g$ is
given by(\ref{g-e}).
\end{example}

\par It is of interest to  note that for  $\gamma = 1$ the class
$\mathcal{B}^{\mu, b}_{\Sigma}(\gamma, \lambda, \phi)$ reduces to the following new subclass
$\mathcal{B}^{\mu, b}_{\Sigma}(\lambda, \phi).$

\begin{definition}
A function $f\in \Sigma $ given by (\ref{Int-e1}) is said to be in the class
$\mathcal{B}^{\mu, b}_{\Sigma}(\lambda, \phi)$ if the following conditions are satisfied:
\begin{equation}\label{Defi-2-e1}
\left(\frac{z^{1-\lambda}(\mathcal{J}_\mu^bf(z))'}{
[\mathcal{J}_\mu^bf(z)]^{1-\lambda}}-1\right)  \prec_{\widetilde{q}}  (\phi(z)-1)
\end{equation}
and
\begin{equation}\label{Defi-2-e2}
\left(\frac{w^{1-\lambda}(\mathcal{J}_\mu^bg(w))'}{
[\mathcal{J}_\mu^bg(w)]^{1-\lambda}}-1\right) \prec_{\widetilde{q}}  (\phi(w)-1)
\end{equation}
where  $\lambda \geq 0;z,w \in \triangle$ and the function $g$ is given by(\ref{g-e}).

\end{definition}

\par For particular values of $\lambda,$ we have

\begin{example}\label{exam5}
For $\lambda =0$ and a function $f\in \Sigma ,$ given by (\ref{Int-e1}) is said to be in the class
$\mathcal{B}^{\mu, b}_{\Sigma}(0, \phi) \equiv \mathcal{S}^{\ast, \mu, b}_{\Sigma}(\phi)$ if the following
conditions are satisfied:
\begin{equation}
\frac{z(\mathcal{J}_\mu^b f(z))'}{\mathcal{J}_\mu^b f(z)}  \prec_{\widetilde{q}}  (\phi(z)-1)
\end{equation}
and
\begin{equation}
\frac{w(\mathcal{J}_\mu^b g(w))'}{\mathcal{J}_\mu^b g(w)}\prec_{\widetilde{q}}  (\phi(w)-1)
\end{equation}
where $z,w \in \triangle$ and the function $g$ is given by(\ref{g-e}).
\end{example}

\begin{example}\label{exam6}
For $\lambda = 1$ and a function $f\in \Sigma, $ given by (\ref{Int-e1}) is said to be in the
class $\mathcal{B}^{\mu,b}_{\Sigma}(1,\phi) \equiv \mathcal{H}^{\mu,b}_{\Sigma}(\phi)$ if the
following conditions are satisfied:
\begin{equation}
(\mathcal{J}_\mu^b f(z))'  \prec_{\widetilde{q}}  (\phi(z)-1)
\end{equation}
and
\begin{equation}
(\mathcal{J}_\mu^b g(w))' \prec_{\widetilde{q}}  (\phi(w)-1)
\end{equation}
where $z,w \in \triangle$ and the function $g$ is given by(\ref{g-e}).
\end{example}

We  now introduce another new subclass of the function class $\Sigma$ of complex order $\gamma \in \mathbb{C}\backslash \{0\}.$

\begin{definition}
A function $f\in \Sigma $ given by (\ref{Int-e1}) is said to be in the
class $\mathcal{G}^{\mu, b}_{\Sigma}(\gamma, \lambda,\phi)$ if the
following conditions are satisfied:
\begin{equation}\label{Defi-1-e1}
\frac{1}{\gamma} \left(
\frac{z (\mathcal{J}_\mu^b f(z))'}{(1-\lambda)\mathcal{J}_\mu^b f(z)+\lambda z (\mathcal{J}_\mu^b f(z))'} -1 \right ) \prec_{\widetilde{q}}  (\phi(z)-1)
\end{equation}
and
\begin{equation}\label{Defi-1-e2}
\frac{1}{\gamma} \left (
\frac{w (\mathcal{J}_\mu^b g(w))'}{(1-\lambda)\mathcal{J}_\mu^b g(w)+\lambda z (\mathcal{J}_\mu^b g(w))'}-1 \right ) \prec_{\widetilde{q}}  (\phi(w)-1)
\end{equation}
where $ \qquad \gamma \in \mathbb{C}\backslash \{0\} ;\,\, 0 \leq \lambda < 1;
\,\, z,w \in \Delta$ and the function $g$ is given by(\ref{g-e}).
\end{definition}
 We note that by taking $\lambda =0$ we have $$\mathcal{G}^{\mu, b}_{\Sigma}(\gamma, 0,\phi) \equiv\mathcal{S}^{\mu, b}_{\Sigma}(\gamma,\phi)$$

\par In the following section we find estimates on the coefficients $|a_2|$ and $|a_3|$ for
functions in the above-defined subclasses $\mathcal{B}^{\mu, b}_{\Sigma}(\gamma,\lambda, \phi)$
of the function class $\Sigma.$

\par In order to derive our main results, we shall need the following lemma:

\begin{lemma}{\rm (see \cite{Pom})}\label{lem-pom}
If $p \in \mathcal{P},$ then $|p_{k}|\leq 2$ for each $k,$ where $\mathcal{P}$ is the family
of all functions $p$ analytic in $\triangle$ for which $\mathfrak{R}\left( p(z) \right) > 0,$
where $p(z) = 1 + p_{1} z + p_{2} z^2 + \cdots$ for $z \in \triangle.$
\end{lemma}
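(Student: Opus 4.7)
The plan is to deduce the estimate from the Herglotz representation theorem for the Carath\'eodory class $\mathcal{P}$. The key tool is that every analytic function $p$ on $\triangle$ with $p(0)=1$ and $\mathfrak{R}(p(z))>0$ admits an integral representation
$$p(z)=\int_0^{2\pi}\frac{e^{it}+z}{e^{it}-z}\,d\mu(t),$$
against a Borel probability measure $\mu$ on $[0,2\pi]$. This representation is classical and would simply be quoted from Pommerenke's monograph \cite{Pom}.

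From there, I would expand the kernel geometrically. For fixed $t$ and $|z|<1$,
$$\frac{e^{it}+z}{e^{it}-z}=1+2\sum_{k=1}^{\infty}e^{-ikt}z^k,$$
and interchanging summation and integration (justified by uniform convergence on compact subsets of $\triangle$) yields
$$p(z)=1+2\sum_{k=1}^{\infty}\left(\int_0^{2\pi}e^{-ikt}\,d\mu(t)\right)z^k.$$
Matching this with the Taylor series $p(z)=1+p_{1}z+p_{2}z^{2}+\cdots$ gives the explicit formula $p_k=2\int_0^{2\pi}e^{-ikt}\,d\mu(t)$ for every $k\geq 1$.

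The claimed bound then follows immediately from the triangle inequality for integrals:
$$|p_k|\leq 2\int_0^{2\pi}\bigl|e^{-ikt}\bigr|\,d\mu(t)=2\mu([0,2\pi])=2.$$
The only substantive step is the Herglotz representation itself, which is where the real content lives; once it is in hand the rest is a single line, so there is no serious obstacle in this plan. An alternative route avoiding the representation theorem is to set $\omega(z)=(p(z)-1)/(p(z)+1)$, which maps $\triangle$ into $\triangle$ with $\omega(0)=0$, and then apply the Schwarz lemma together with a careful comparison of Taylor coefficients of $p=(1+\omega)/(1-\omega)$ to recover the bound, but this path carries more algebraic overhead, particularly for $k\geq 2$, so I would favour the Herglotz argument.
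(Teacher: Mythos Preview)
Your argument via the Herglotz representation is correct and is exactly the standard proof one finds in Pommerenke's book. Note that the paper does not actually prove this lemma at all: it is stated with a citation to \cite{Pom} and used as a black box, so there is no in-paper proof to compare against.
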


\section{Coefficient Bounds for the Function Class $\mathcal{B}^{\mu,b}_{\Sigma}(\gamma,\lambda, \phi)$}

\par We begin by finding the estimates on the coefficients $|a_2|$ and $|a_3|$ for functions in
the class $\mathcal{B}^{\mu,b}_{\Sigma}(\gamma,\lambda, \phi).$

\begin{theorem}\label{Bi-th1}
Let the function $f(z)$ given by $(\ref{Int-e1})$ be in the class
$\mathcal{B}^{\mu,b}_{\Sigma}(\gamma,\lambda, \phi).$ Then
\begin{equation}\label{bi-th1-b-a2}
|a_2| \leq \frac{|\gamma|~|C_0| B_{1} \sqrt{2 B_{1}}}{ \sqrt{  | \gamma C_0 B_{1}^2  [(\lambda - 1)(\lambda + 2)
\Theta_{2}^2 + 2 (\lambda + 2) \Theta_{3}] -2 (B_{2} - B_{1}) (1 + \lambda)^2 \Theta_{2}^2 | } }
\end{equation}
and
\begin{equation}\label{bi-th1-b-a3}
|a_3| \leq \frac{|\gamma||C_0| B_{1}}{(\lambda+2)\Theta_3}+\frac{|\gamma| |C_1|B_1}{(\lambda+2)\Theta_3}+ \left(\frac{|\gamma| |C_0|B_{1}} {(1+\lambda)\Theta_2}\right)^2.
\end{equation}
\end{theorem}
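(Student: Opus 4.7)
The plan is to unpack the quasi-subordination into explicit equations between Taylor coefficients, combine them to decouple $a_2$ from $a_3$, and then estimate each via Pommerenke's lemma.

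\textbf{Step 1: Unpack the quasi-subordination.} By definition of $\prec_{\widetilde{q}}$, there exist a Schwarz function $u$ and an analytic $\psi$ with $|\psi|<1$ such that
\begin{equation*}
\frac{1}{\gamma}\left(\frac{z^{1-\lambda}(\mathcal{J}_\mu^bf(z))'}{[\mathcal{J}_\mu^bf(z)]^{1-\lambda}}-1\right)=\psi(z)\,[\phi(u(z))-1],
\end{equation*}
and analogously with a Schwarz function $v$ for $g=f^{-1}$. Introduce $p(z)=(1+u(z))/(1-u(z))=1+p_1z+p_2z^2+\cdots$ and $q(w)=(1+v(w))/(1-v(w))=1+q_1w+q_2w^2+\cdots$. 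Then $p,q\in\mathcal{P}$, so $|p_k|,|q_k|\le 2$ by Lemma~\ref{lem-pom}, and $u_1=p_1/2$, $u_2=(p_2-p_1^2/2)/2$ (similarly for $v,q$).

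\textbf{Step 2: Expand both sides to order $z^2$.} Writing $\mathcal{J}_\mu^bf(z)=z+\Theta_2a_2z^2+\Theta_3a_3z^3+\cdots$ and expanding $[\mathcal{J}_\mu^bf(z)/z]^{\lambda-1}$ binomially, a direct computation gives
\begin{equation*}
\frac{z^{1-\lambda}(\mathcal{J}_\mu^bf(z))'}{[\mathcal{J}_\mu^bf(z)]^{1-\lambda}}-1=(\lambda+1)\Theta_2a_2\,z+\bigl[(\lambda+2)\Theta_3a_3+\tfrac{1}{2}(\lambda-1)(\lambda+2)\Theta_2^2a_2^2\bigr]z^2+\cdots.
\end{equation*}
On the right-hand side, substituting $u_1,u_2$ into $\phi\circ u$ and multiplying by $\psi$ gives coefficient $C_0B_1p_1/2$ at $z$ and $C_0B_1p_2/2+C_0(B_2-B_1)p_1^2/4+C_1B_1p_1/2$ at $z^2$. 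Matching coefficients for $f$ produces two equations; the analogous computation for $g$ (with $b_2=-a_2$, $b_3=2a_2^2-a_3$, and $p\to q$) produces two more.

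\textbf{Step 3: Combine and estimate.} The first-order equations give $p_1=-q_1=2(\lambda+1)\Theta_2a_2/(\gamma C_0 B_1)$, so $p_1^2=q_1^2$ and $p_1+q_1=0$. Adding the two $z^2$-equations eliminates $a_3$ and, after substituting for $p_1^2$, yields
\begin{equation*}
a_2^2\left\{\gamma C_0B_1^2\bigl[(\lambda-1)(\lambda+2)\Theta_2^2+2(\lambda+2)\Theta_3\bigr]-2(B_2-B_1)(1+\lambda)^2\Theta_2^2\right\}=\tfrac{\gamma^2C_0^2B_1^3(p_2+q_2)}{2}.
\end{equation*}
Taking moduli and using $|p_2+q_2|\le 4$ delivers the bound \eqref{bi-th1-b-a2}. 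Subtracting the two $z^2$-equations (using $p_1^2-q_1^2=0$ and $p_1-q_1=2p_1$) gives
\begin{equation*}
a_3=a_2^2+\tfrac{\gamma C_0B_1(p_2-q_2)}{4(\lambda+2)\Theta_3}+\tfrac{\gamma C_1B_1p_1}{2(\lambda+2)\Theta_3};
\end{equation*}
estimating with $|p_k|,|q_k|\le 2$ and using the crude bound $|a_2|\le|\gamma||C_0|B_1/((1+\lambda)\Theta_2)$ from the first-order equation (not the sharper one from \eqref{bi-th1-b-a2}) yields \eqref{bi-th1-b-a3}.

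\textbf{Main obstacle.} The one calculation that needs genuine care is the $z^2$-coefficient in Step~2: the binomial expansion of $[\mathcal{J}_\mu^bf(z)/z]^{\lambda-1}$ contributes a term $(\lambda-1)(\lambda-2)/2\cdot\Theta_2^2a_2^2$ which must be combined with the cross-term $2(\lambda-1)\Theta_2^2a_2^2$ from $(\mathcal{J}_\mu^bf)'$ times the linear part to collapse into the compact factor $\tfrac{1}{2}(\lambda-1)(\lambda+2)\Theta_2^2a_2^2$; symbolic care is also needed to ensure that the parametrization $\phi(u(z))-1$ produces exactly the $(B_2-B_1)$ combination (rather than $B_2$) that appears in the stated bound. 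Once these expansions are pinned down the rest is linear algebra plus the Pommerenke inequality.
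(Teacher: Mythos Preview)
Your proposal is correct and follows essentially the same route as the paper: set up the quasi-subordination with Schwarz functions, pass to $p,q\in\mathcal{P}$, match coefficients to order $z^2$ for both $f$ and $g=f^{-1}$, add the second-order relations (substituting the first-order ones for $p_1^2+q_1^2$) to isolate $a_2^2$, subtract them to isolate $a_3-a_2^2$, and finish with $|p_k|,|q_k|\le 2$. Your expression for $a_3$ and the subsequent estimates coincide exactly with the paper's once one uses $p_1=-q_1$ and $a_2^2=\gamma^2C_0^2B_1^2(p_1^2+q_1^2)/\bigl(8(1+\lambda)^2\Theta_2^2\bigr)$.
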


\begin{proof}
Let $f \in \mathcal{S}^{\mu,b}_{\Sigma}(\gamma,\lambda, \phi)$ and $g = f^{-1}.$
Then there are analytic functions $u, v : \triangle \longrightarrow \triangle$ with $u(0) = 0 = v(0),$
satisfying
\begin{equation}\label{bi-th1-pr-e1}
 \frac{1}{\gamma} \left(\frac{z^{1-\lambda}(\mathcal{J}_\mu^b f(z))'}{
[\mathcal{J}_\mu^b f(z)]^{1-\lambda}}  - 1 \right ) =\psi(z)[ \phi(u(z))-1]
\end{equation}
and
\begin{equation}\label{bi-th1-pr-e2}
 \frac{1}{\gamma} \left(\frac{w^{1-\lambda}(\mathcal{J}_\mu^b g(w))'}{
[\mathcal{J}_\mu^b g(w)]^{1-\lambda}}  - 1 \right )   =\psi(w)[ \phi(u(w))-1].
\end{equation}

Define the functions $p(z)$ and $q(z)$ by
\begin{equation*}
p(z) : = \frac{1 + u(z)}{1 - u(z)} = 1 + p_{1} z + p_{2} z^2 + \cdots
\end{equation*}
and
\begin{equation*}
q(z) : = \frac{1 + v(z)}{1 - v(z)} = 1 + q_{1} z + q_{2} z^2 + \cdots
\end{equation*}
or, equivalently,
\begin{equation}\label{c7e2.5}
u(z) : = \frac{p(z) - 1}{p(z) + 1} = \frac{1}{2}\left[ p_{1} z + \left(p_{2} - \frac{p_{1}^2}{2}\right) z^2 + \cdots \right]
\end{equation}
and
\begin{equation}\label{c7e2.6}
v(z) : = \frac{q(z) - 1}{q(z) + 1} = \frac{1}{2}\left[ q_{1} z + \left(q_{2} - \frac{q_{1}^2}{2}\right) z^2 + \cdots \right].
\end{equation}
Then $p(z)$ and $q(z)$ are analytic in $\triangle$ with $p(0) = 1 = q(0).$ Since $u, v : \triangle \rightarrow \triangle,$
the functions $p(z)$ and $q(z)$ have a positive real part in $\triangle,$ and $|p_{i}| \leq 2$ and $|q_{i}| \leq 2.$
\par Now,
\begin{equation}\label{Exp-p(z)}
\psi(z)[\phi(u(z))-1] =\frac{1}{2}C_0B_1p_1z+ \left[ \frac{1}{2} C_1~B_1p_{1}  + \frac{1}{2}C_0B_1\left(p_{2} - \frac{p_{1}^2}{2}\right)+\frac{C_0B_2}{4}p_1^2\right] z^2 + \cdots
\end{equation}
and
\begin{equation}\label{Exp-q(w)}
\psi(w)[\phi(v(w))-1]=\frac{1}{2}C_0B_1q_1w+ \left[ \frac{1}{2} C_1~B_1q_{1}  + \frac{1}{2}C_0B_1\left(q_{2} - \frac{q_{1}^2}{2}\right)+\frac{C_0B_2}{4}q_1^2\right] w^2 + \cdots
\end{equation}
respectively.

\iffalse
Using \eqref{c7e2.5} and \eqref{c7e2.6} in \eqref{bi-th1-pr-e1} and \eqref{bi-th1-pr-e2} respectively, we have
\begin{equation}\label{c7e2.7}
1 + \frac{1}{\gamma} \left(\frac{z^{1-\lambda}(\mathcal{J}_\mu^b f(z))'}{
[\mathcal{J}_\mu^b f(z)]^{1-\lambda}}  - 1 \right ) = \varphi\left( \frac{1}{2}\left[ p_{1} z + \left(p_{2} - \frac{p_{1}^2}{2}\right) z^2 + \cdots \right] \right)
\end{equation}
and
\begin{equation}\label{c7e2.8}
1 + \frac{1}{\gamma} \left(\frac{w^{1-\lambda}(\mathcal{J}_\mu^b g(w))'}{
[\mathcal{J}_\mu^b g(w)]^{1-\lambda}}  - 1 \right ) = \varphi\left( \frac{1}{2}\left[ q_{1} w + \left(q_{2} - \frac{q_{1}^2}{2}\right) w^2 + \cdots \right] \right).
\end{equation}
\fi
In light of \eqref{Int-e1} - \eqref{c7e3}, from \eqref{Exp-p(z)} and \eqref{Exp-q(w)},
it is evident that
\begin{multline*}
 \frac{(\lambda + 1)}{\gamma} \Theta_{2} a_{2} z + \frac{1}{\gamma} \left[ (\lambda + 2) \Theta_{3} a_{3}
+ \frac{(\lambda - 1)(\lambda + 2)}{2} \Theta_{2}^2 a_{2}^2 \right] z^2 + \cdots\\
= \frac{1}{2}C_0B_1p_1z+ \left[ \frac{1}{2} C_1~B_1p_{1}  + \frac{1}{2}C_0B_1\left(p_{2} - \frac{p_{1}^2}{2}\right)+\frac{C_0B_2}{4}p_1^2\right] z^2 + \cdots
\qquad \qquad \qquad
\end{multline*}
and
\begin{multline*}
 - \frac{(\lambda + 1)}{\gamma} \Theta_{2} a_{2} w + \frac{1}{\gamma} \left[ - (\lambda + 2) \Theta_{3} a_{3}
+ \left(\frac{(\lambda - 1)(\lambda + 2)}{2} \Theta_{2}^2 + 2 (\lambda + 2) \Theta_{3}\right) a_{2}^2 \right]  w^2 + \cdots\\
= \frac{1}{2}C_0B_1q_1w+ \left[ \frac{1}{2} C_1~B_1q_{1}  + \frac{1}{2}C_0B_1\left(q_{2} - \frac{q_{1}^2}{2}\right)+\frac{C_0B_2}{4}q_1^2\right] w^2 + \cdots
\qquad \qquad \qquad
\end{multline*}
which yields the following relations.
\begin{eqnarray}
(1+\lambda)\Theta_2 a_2 & = & \frac{\gamma}{2}C_0 B_{1} p_{1}\label{c7e2.9}\\
\frac{(\lambda-1)(\lambda+2)}{2}\Theta_2^2 a_2^2+(\lambda+2)\Theta_3 a_3 & = &\gamma\left[ \frac{1}{2} C_1~B_1p_{1}  + \frac{1}{2}C_0B_1\left(p_{2} - \frac{p_{1}^2}{2}\right)+\frac{C_0B_2}{4}p_1^2\right]\nonumber\\
\label{c7e2.10}\\
- (\lambda+1)\Theta_2 a_2 & = & \frac{\gamma}{2}C_0 B_{1} q_{1}\label{c7e2.11}
\end{eqnarray}
and
\begin{equation}\label{c7e2.12}
\left(2(\lambda+2)\Theta_3+\frac{(\lambda-1)(\lambda+2)}{2}\Theta_2^2\right) a_2^2 -(\lambda+2)\Theta_3a_3  = \gamma\left[ \frac{1}{2} C_1~B_1q_{1}  + \frac{1}{2}C_0B_1\left(q_{2} - \frac{q_{1}^2}{2}\right)+\frac{C_0B_2}{4}q_1^2\right].
\end{equation}
From \eqref{c7e2.9} and \eqref{c7e2.11}, it follows that
\begin{equation}\label{c7e2.13}
p_{1} = - q_{1}
\end{equation}
and
\begin{equation}\label{c7e2.14}
8(\lambda+1)^2 \Theta_2^2 a_{2}^2 = \gamma^2 C_0^2~B_{1}^2 (p_{1}^2 + q_{1}^2).
\end{equation}
Adding \eqref{c7e2.10} and \eqref{c7e2.12}, we obtain
\begin{equation}\label{th1-a2-star}
\left[(\lambda-1)(\lambda+2)\Theta_2^2 +2(\lambda+2)\Theta_3\right]a_2^2 =
\frac{\gamma C_0 B_{1}}{2} (p_2+q_2) + \frac{\gamma}{4} C_0(B_{2} - B_{1}) (p_{1}^2 + q_{1}^2).
\end{equation}
Using \eqref{c7e2.14} in \eqref{th1-a2-star}, we get
\begin{equation}\label{th1-p1-square}
a_2^2=\frac{\gamma^2 C_0^2~B_{1}^3 (p_2+q_2)}{2 \gamma C_0 B_{1}^2
\left[(\lambda-1)(\lambda+2)\Theta_2^2 +2(\lambda+2)\Theta_3\right] - 4 (B_{2} - B_{1}) (1 + \lambda)^2 \Theta_{2}^2 }.
\end{equation}
Applying Lemma \ref{lem-pom} for the coefficients $p_2$ and $q_2,$ we immediately have
\begin{equation*}%\label{th1-|p1|}
|a_2|^2  \leq  \frac{2|\gamma||C_0|^2 B_{1}^3}{| \gamma C_0 B_{1}^2 \left[(\lambda-1)(\lambda+2)\Theta_2^2 +2(\lambda+2)\Theta_3\right]
- 2 (B_{2} - B_{1}) (1 + \lambda)^2 \Theta_{2}^2 |}.
\end{equation*}
%Hence,\[|a_2|\leq\frac{|\gamma| B_1\sqrt{B_1}}{\sqrt{\left|[\gamma(\lambda^2-2\lambda)B_1^2+(2-\lambda)^2(B_1-B_2)]\Theta_2^2+
%\gamma(3-\lambda)B_1^2 \Theta_3\right|}}.\]
This gives the bound on $|a_2|$ as asserted in \eqref{bi-th1-b-a2}.

\par Next, in order to find the bound on $|a_3|$, by subtracting \eqref{c7e2.12} from
\eqref{c7e2.10}, we get
\begin{multline}\label{th1-a3-cal-e1}
\left[2(\lambda+2)\Theta_3 a_3-2(\lambda+2)\Theta_3 a_2^2 \right]\\ = \frac{\gamma~C_0 B_{1}(p_{2} - q_{2})}{2}
+ \frac{\gamma~C_1 B_{1}(p_1 - q_1)}{2}
+ \frac{\gamma~C_0 (B_{2}-B_1)(p_{1}^2 - q_{1}^2)}{4}.
\end{multline}
Using \eqref{c7e2.13} and \eqref{c7e2.14} in \eqref{th1-a3-cal-e1}, we get
$$
a_3 = \frac{\gamma C_0 B_{1} (p_2-q_2)}{4(\lambda+2)\Theta_3}+\frac{\gamma B_1C_1(p_1-q_1)}{4(\lambda+2)\Theta_3}+\frac{\gamma^2 C_0^2B_{1}^2(p_1^2+q_1^2)}{8(1+\lambda)^2\Theta_2^2}.
$$
Applying Lemma \ref{lem-pom} once again for the coefficients $p_{1}, q_{1}, p_2$ and $q_2,$ we readily get(\ref{bi-th1-b-a3}).
%\[|a_3| \leq \frac{|\gamma| B_1}{(\lambda+2)\Theta_3}+ \frac{|\gamma|^2 B_1^2} {(1+\lambda)^2\Theta_2^2}.\]
This completes the proof of Theorem \ref{Bi-th1}.
\end{proof}

\par Putting $\lambda = 0$ in Theorem \ref{Bi-th1}, we have the following corollary.

\begin{corollary}\label{sss1}
Let the function $f(z)$ given by $(\ref{Int-e1})$ be in the class $\mathcal{S}^{\mu, b}_{\Sigma}(\gamma, \phi).$
Then
\begin{equation}\label{sss2}
|a_2| \leq \frac{|\gamma|~|C_0| B_{1} \sqrt{B_{1}} }{ \sqrt{ |\gamma~C_0~ B_{1}^2 (2 \Theta_{3} - \Theta_{2}^2) -(B_{2} - B_{1}) \Theta_{2}^2| } }
\end{equation}
and
\begin{equation}\label{sss3}
|a_3| \leq \frac{|\gamma||C_0| B_1}{2\Theta_3} +\frac{|\gamma| |C_1|B_1}{2\Theta_3}+ \left(\frac{|\gamma||C_0| B_{1} } {\Theta_2}\right)^2.
\end{equation}
\end{corollary}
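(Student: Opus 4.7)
The plan is to recognize that the class $\mathcal{S}^{\mu, b}_{\Sigma}(\gamma, \phi)$ of Example \ref{exam1} is precisely the special case $\lambda = 0$ of the class $\mathcal{B}^{\mu,b}_{\Sigma}(\gamma,\lambda,\phi)$ treated in Theorem \ref{Bi-th1}. Indeed, setting $\lambda = 0$ in \eqref{Defi-1-e1}–\eqref{Defi-1-e2} reduces $z^{1-\lambda}(\mathcal{J}_\mu^b f(z))'/[\mathcal{J}_\mu^b f(z)]^{1-\lambda}$ to $z(\mathcal{J}_\mu^b f(z))'/\mathcal{J}_\mu^b f(z)$, which is exactly the expression appearing in Example \ref{exam1}. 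So the corollary follows by substituting $\lambda = 0$ directly into the bounds \eqref{bi-th1-b-a2} and \eqref{bi-th1-b-a3} and simplifying.

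First I would handle the $|a_3|$ estimate, which is immediate: with $\lambda = 0$ we have $(\lambda+2) = 2$ and $(1+\lambda) = 1$, so \eqref{bi-th1-b-a3} collapses termwise to
\[
|a_3| \leq \frac{|\gamma||C_0|B_1}{2\Theta_3} + \frac{|\gamma||C_1|B_1}{2\Theta_3} + \left(\frac{|\gamma||C_0|B_1}{\Theta_2}\right)^2,
\]
matching \eqref{sss3}.

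Next I would carry out the substitution in \eqref{bi-th1-b-a2}. The bracket under the radical becomes
\[
(\lambda-1)(\lambda+2)\Theta_2^2 + 2(\lambda+2)\Theta_3 \;\longmapsto\; -2\Theta_2^2 + 4\Theta_3 = 2(2\Theta_3 - \Theta_2^2),
\]
while the correction term $2(B_2-B_1)(1+\lambda)^2\Theta_2^2$ becomes $2(B_2-B_1)\Theta_2^2$. The only mild care point, which I expect to be the main (and essentially only) obstacle, is factoring out the common factor of $2$ from the radicand: the denominator of \eqref{bi-th1-b-a2} simplifies to $\sqrt{2}\,\sqrt{|\gamma C_0 B_1^2(2\Theta_3 - \Theta_2^2) - (B_2 - B_1)\Theta_2^2|}$, and this $\sqrt{2}$ cancels the $\sqrt{2}$ inside $\sqrt{2B_1}$ in the numerator, yielding exactly
\[
|a_2| \leq \frac{|\gamma||C_0|B_1\sqrt{B_1}}{\sqrt{|\gamma C_0 B_1^2 (2\Theta_3 - \Theta_2^2) - (B_2 - B_1)\Theta_2^2|}},
\]
as stated in \eqref{sss2}. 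No new coefficient analysis or further use of Lemma \ref{lem-pom} is required, since the corollary is purely a specialization of Theorem \ref{Bi-th1}.
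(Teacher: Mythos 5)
Your proposal is correct and is exactly the paper's own route: the paper obtains this corollary simply by ``putting $\lambda=0$ in Theorem \ref{Bi-th1},'' and your arithmetic (the bracket becoming $2(2\Theta_3-\Theta_2^2)$, the factor $2$ pulled out of the radicand cancelling the $\sqrt{2}$ in $\sqrt{2B_1}$, and the termwise collapse of the $|a_3|$ bound) checks out and reproduces \eqref{sss2} and \eqref{sss3}. The only difference is that you spell out the simplification that the paper leaves implicit.
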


\par Putting $\lambda =1$ in Theorem \ref{Bi-th1}, we have the following corollary.

\begin{corollary}\label{Bi-cor1}
Let the function $f(z)$ given by $(\ref{Int-e1})$ be in the class
$\mathcal{H}^{\mu, b}_{\Sigma}(\gamma, \phi).$ Then
\begin{equation}\label{bi-cor1-b-a2}
|a_2| \leq \frac{|\gamma||C_0| B_{1} \sqrt{B_{1}} }{ \sqrt{ | 3 \gamma~C_0~ B_{1}^2  \Theta_{3} - 4 (B_{2} - B_{1}) \Theta_{2}^2| } }
\end{equation}
and
\begin{equation}\label{bi-cor1-b-a3}
|a_3| \leq \frac{|\gamma||C_0| B_1}{3 \Theta_3}+\frac{|\gamma| |C_1|B_1}{3\Theta_3}+ \left(\frac{|\gamma||C_0| B_1 } {2 \Theta_2}\right)^2 .
\end{equation}
\end{corollary}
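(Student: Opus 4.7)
The plan is to obtain Corollary \ref{Bi-cor1} as an immediate specialization of Theorem \ref{Bi-th1} to $\lambda = 1$, so no independent argument is needed. By Example \ref{exam2}, the class $\mathcal{H}^{\mu,b}_{\Sigma}(\gamma,\phi)$ is by definition $\mathcal{B}^{\mu,b}_{\Sigma}(\gamma,1,\phi)$, so every function $f$ satisfying the hypotheses of the corollary satisfies the hypotheses of Theorem \ref{Bi-th1} with $\lambda = 1$; hence the bounds \eqref{bi-th1-b-a2} and \eqref{bi-th1-b-a3} apply directly, and it only remains to simplify them.

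For \eqref{bi-cor1-b-a2}, I would note that when $\lambda = 1$ the three coefficient factors appearing in \eqref{bi-th1-b-a2} become $(\lambda-1)(\lambda+2) = 0$, $2(\lambda+2) = 6$, and $2(1+\lambda)^2 = 8$. Substituting these into the radicand collapses it to $|6\gamma C_0 B_1^2 \Theta_3 - 8(B_2 - B_1)\Theta_2^2| = 2\,|3\gamma C_0 B_1^2 \Theta_3 - 4(B_2 - B_1)\Theta_2^2|$. The factor $\sqrt{2}$ that comes out of the denominator cancels the $\sqrt{2}$ inside $\sqrt{2 B_1}$ in the numerator of \eqref{bi-th1-b-a2}, leaving exactly \eqref{bi-cor1-b-a2}.

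For \eqref{bi-cor1-b-a3}, the substitution $\lambda = 1$ in \eqref{bi-th1-b-a3} gives $(\lambda+2) = 3$ and $(1+\lambda) = 2$, so the three summands become $\dfrac{|\gamma||C_0|B_1}{3\Theta_3}$, $\dfrac{|\gamma||C_1|B_1}{3\Theta_3}$, and $\bigl(\dfrac{|\gamma||C_0|B_1}{2\Theta_2}\bigr)^2$, matching \eqref{bi-cor1-b-a3} verbatim.

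There is essentially no obstacle here: the derivation is arithmetic specialization, and the only point requiring a moment of care is extracting the factor $\sqrt{2}$ from the square root in the denominator of the $|a_2|$ bound and cancelling it against the $\sqrt{2}$ in the numerator. I would therefore present the corollary with a one-sentence proof invoking Theorem \ref{Bi-th1} with $\lambda = 1$ and displaying the two simplified expressions.
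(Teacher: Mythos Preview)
Your proposal is correct and matches the paper's approach exactly: the paper simply states that the corollary follows by putting $\lambda = 1$ in Theorem~\ref{Bi-th1}, and your arithmetic specialization (including the cancellation of the $\sqrt{2}$ factors in the $|a_2|$ bound) is the straightforward verification of that claim.
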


If $\mathcal{J}_\mu^b$ is the identity map, from Corollary \ref{sss1} and  \ref{Bi-cor1}, we get the following corollaries.

\begin{corollary}\label{sss1a}
Let the function $f(z)$ given by $(\ref{Int-e1})$ be in the class
$\mathcal{S}^{*}_{\Sigma}(\gamma, \phi).$ Then
\begin{equation}\label{sss2a}
|a_2| \leq \frac{ |\gamma||C_0| B_{1} \sqrt{ B_{1} } }{ \sqrt{ | \gamma~C_0~ B_{1}^2 - (B_{2} - B_{1}) | } }
\end{equation}
and
\begin{equation}\label{sss3a}
|a_3| \leq \frac{|\gamma||C_0| B_1}{2}+\frac{|\gamma| |C_1|B_1}{2}+ \left(|\gamma||C_0| B_1\right)^2.
\end{equation}
\end{corollary}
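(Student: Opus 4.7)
The plan is to obtain Corollary \ref{sss1a} as an immediate specialization of Corollary \ref{sss1}, using the fact that taking the Srivastava--Attiya operator $\mathcal{J}_\mu^b$ to be the identity map forces the multiplier sequence to become trivial. Concretely, recalling that
\[
\Theta_k \;=\; \left| \left( \frac{1+b}{k+b} \right)^{\mu} \right|,
\]
the hypothesis that $\mathcal{J}_\mu^b f = f$ for every $f \in \mathcal{A}$ corresponds to $\Theta_k = 1$ for all $k \geq 2$ (for instance, take $\mu = 0$). Under this choice, the quasi-subordination conditions defining $\mathcal{S}^{\mu,b}_\Sigma(\gamma,\phi)$ reduce to
\[
\frac{1}{\gamma}\!\left(\frac{z f'(z)}{f(z)} - 1\right) \prec_{\widetilde q} \phi(z) - 1,
\qquad
\frac{1}{\gamma}\!\left(\frac{w g'(w)}{g(w)} - 1\right) \prec_{\widetilde q} \phi(w) - 1,
\]
which is precisely the class $\mathcal{S}^{*}_\Sigma(\gamma,\phi)$ of bi-starlike functions of complex order $\gamma$ in the quasi-subordination sense.

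Next, I would substitute $\Theta_2 = \Theta_3 = 1$ directly into the two bounds of Corollary \ref{sss1}. For the $|a_2|$ estimate \eqref{sss2}, the denominator becomes
\[
\sqrt{\,\bigl|\,\gamma C_0 B_1^2\, (2 - 1) \;-\; (B_2 - B_1)\,\bigr|\,} \;=\; \sqrt{\,\bigl|\,\gamma C_0 B_1^2 - (B_2 - B_1)\,\bigr|\,},
\]
while the numerator is unchanged, yielding exactly \eqref{sss2a}. For the $|a_3|$ estimate \eqref{sss3}, setting $\Theta_2 = \Theta_3 = 1$ turns the three summands into
\[
\frac{|\gamma||C_0|B_1}{2}, \qquad \frac{|\gamma||C_1|B_1}{2}, \qquad \bigl(|\gamma||C_0|B_1\bigr)^2,
\]
which together give \eqref{sss3a}.

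Since the whole argument is a transparent specialization, there is essentially no obstacle; the only thing to check is that the reduction of the operator to the identity really does correspond to $\Theta_k \equiv 1$ in the chain of equations \eqref{c7e2.9}--\eqref{c7e2.12} of Theorem \ref{Bi-th1} (which is clear from the definition of $\mathcal{J}_\mu^b$). Alternatively, one could repeat the proof of Theorem \ref{Bi-th1} verbatim with $\mathcal{J}_\mu^b f$ replaced by $f$ throughout, obtaining the same relations with $\Theta_2 = \Theta_3 = 1$ and then applying Lemma \ref{lem-pom} to $p_2, q_2$ (for $|a_2|$) and to $p_1, q_1, p_2, q_2$ (for $|a_3|$) in exactly the same manner. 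Either route completes the proof of Corollary \ref{sss1a}.
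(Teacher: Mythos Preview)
Your proposal is correct and follows exactly the paper's approach: the paper simply states that when $\mathcal{J}_\mu^b$ is the identity map (so $\Theta_k=1$), Corollary~\ref{sss1a} follows from Corollary~\ref{sss1}. You have merely spelled out the substitution $\Theta_2=\Theta_3=1$ in more detail than the paper does.
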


\begin{corollary}\label{mmm1}
Let the function $f(z)$ given by $(\ref{Int-e1})$ be in the class $\mathcal{H}_{\Sigma}(\gamma, \phi).$
Then
\begin{equation}\label{mmm2}
|a_2| \leq \frac{ |\gamma||C_0| B_{1} \sqrt{ B_{1} } }{ \sqrt{ | 3 \gamma~C_0~ B_{1}^2 - 4 (B_{2} - B_{1}) | } }
\end{equation}
and
\begin{equation}\label{mmm3}
|a_3| \leq \frac{|\gamma||C_0| B_1}{3}+\frac{|\gamma| |C_1|B_1}{3}+ \left(\frac{ |\gamma||C_0| B_1 } {2}\right)^2.
\end{equation}
\end{corollary}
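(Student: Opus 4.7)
The plan is to obtain Corollary~\ref{mmm1} as an immediate specialization of Corollary~\ref{Bi-cor1}, not by repeating the coefficient computation from scratch. The key observation is that taking $\mathcal{J}_\mu^b$ to be the identity map amounts, on the level of Taylor coefficients, to setting $\Theta_k=1$ for every $k\ge 2$ in the expansion $\mathcal{J}_\mu^b f(z)=z+\sum_{k\ge 2}\Theta_k a_k z^k$. (One concrete way this occurs is $\mu=0$, which makes $\Theta_k=|((1+b)/(k+b))^0|=1$.) Under this reduction, the defining quasi-subordination conditions of the class $\mathcal{H}^{\mu,b}_\Sigma(\gamma,\phi)$ from Example~\ref{exam2} collapse to
\[
\tfrac{1}{\gamma}\bigl(f'(z)-1\bigr)\prec_{\widetilde q}(\phi(z)-1),\qquad \tfrac{1}{\gamma}\bigl(g'(w)-1\bigr)\prec_{\widetilde q}(\phi(w)-1),
\]
which is exactly the defining condition of $\mathcal{H}_\Sigma(\gamma,\phi)$.

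Next I would simply substitute $\Theta_2=\Theta_3=1$ into the two bounds \eqref{bi-cor1-b-a2} and \eqref{bi-cor1-b-a3} of Corollary~\ref{Bi-cor1}. The denominator in the bound for $|a_2|$ becomes $\sqrt{\,|3\gamma C_0 B_1^2-4(B_2-B_1)|\,}$, yielding \eqref{mmm2}; and the three summands in the bound for $|a_3|$ become $|\gamma||C_0|B_1/3$, $|\gamma||C_1|B_1/3$, and $(|\gamma||C_0|B_1/2)^2$, yielding \eqref{mmm3}. No new estimation, no new appeal to Lemma~\ref{lem-pom}, and no new manipulation of the $p_i,q_i$ coefficients is required, because all of that work has already been absorbed into the statement of Corollary~\ref{Bi-cor1}.

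There is essentially no genuine obstacle here; the only thing that needs a brief justification is the claim that the identity operator corresponds to $\Theta_k\equiv 1$ in the framework \eqref{ch9_1.6}--\eqref{ch9_1.7}, so that Corollary~\ref{Bi-cor1} is genuinely applicable. Once this is recorded in a single sentence, the corollary follows by direct substitution. The proof therefore consists of this identification together with the arithmetic simplification of \eqref{bi-cor1-b-a2} and \eqref{bi-cor1-b-a3} at $\Theta_2=\Theta_3=1$.
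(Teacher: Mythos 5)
Your proposal is correct and is exactly the paper's own route: the paper obtains Corollary~\ref{mmm1} by taking $\mathcal{J}_\mu^b$ to be the identity map (so $\Theta_2=\Theta_3=1$) in Corollary~\ref{Bi-cor1}, and the substitution yields \eqref{mmm2} and \eqref{mmm3} verbatim. Your added remark justifying why the identity operator corresponds to $\Theta_k\equiv 1$ is a small but welcome clarification that the paper leaves implicit.
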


\section{Coefficient Bounds for the Function Class $\mathcal{G}^{\mu,b}_{\Sigma}(\gamma,\lambda, \phi)$}
\begin{theorem}\label{Bi-th2}
Let the function $f(z)$ given by $(\ref{Int-e1})$ be in the class
$\mathcal{G}^{\mu,b}_{\Sigma}(\gamma, \lambda,\phi).$ Then
\begin{equation}\label{bi-th1-b-a2g}
|a_2|\leq\frac{|\gamma|~|C_0| B_1\sqrt{B_1}}{\sqrt{\left|[\gamma~C_0~(\lambda^2-1)B_1^2+ (1-\lambda)^2(B_1-B_2)]\Theta_2^2+ 2\gamma(1-\lambda)C_0B_1^2 \Theta_3\right|}}
\end{equation}
and
\begin{equation}\label{bi-th1-b-a3g}
|a_3| \leq\frac{|\gamma|| C_1| B_1}{2(1-\lambda)\Theta_3}+ \frac{|\gamma|| C_0|~~ B_1}{2(1-\lambda)\Theta_3}+\frac{|\gamma|| C_0|^2~ B_1^2} {(1-\lambda)^2\Theta_2^2} .
\end{equation}
\end{theorem}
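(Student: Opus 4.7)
The plan is to mirror the strategy employed for Theorem \ref{Bi-th1}, replacing the Bazilevi\u{c}-type functional $z^{1-\lambda}(\mathcal{J}_\mu^b f)'/[\mathcal{J}_\mu^b f]^{1-\lambda}$ by the S\u{a}l\u{a}gean-type expression $zF'/[(1-\lambda)F+\lambda zF']$ with $F=\mathcal{J}_\mu^b f$. Set $f\in\mathcal{G}^{\mu,b}_{\Sigma}(\gamma,\lambda,\phi)$ and let $u,v:\triangle\to\triangle$ be the Schwarz-type functions guaranteed by the quasi-subordinations, so that the two defining relations take the form $\psi(z)[\phi(u(z))-1]$ and $\psi(w)[\phi(v(w))-1]$. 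Introduce the Carath\'eodory substitutions $p(z)=(1+u)/(1-u)$ and $q(w)=(1+v)/(1-v)$ exactly as in \eqref{c7e2.5}--\eqref{c7e2.6}, so that the right-hand sides admit the expansions \eqref{Exp-p(z)}--\eqref{Exp-q(w)}.

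The main computational step is the power-series expansion of the left-hand side. Using $F(z)=z+\Theta_2 a_2 z^2+\Theta_3 a_3 z^3+\cdots$ and the formal identity $(1+Az+Bz^2+\cdots)^{-1}=1-Az+(A^2-B)z^2+\cdots$, one finds
$$\frac{zF'(z)}{(1-\lambda)F(z)+\lambda zF'(z)} = 1 + (1-\lambda)\Theta_2 a_2\, z + \bigl[2(1-\lambda)\Theta_3 a_3 + (\lambda^2-1)\Theta_2^2 a_2^2\bigr] z^2 + \cdots.$$
The corresponding expansion for $g$ is obtained by replacing $a_2,a_3$ with $-a_2$ and $2a_2^2-a_3$ respectively, using the inverse-coefficient formula \eqref{g-e}. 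Dividing by $\gamma$, subtracting $1$, and matching coefficients of $z,z^2,w,w^2$ against \eqref{Exp-p(z)} and \eqref{Exp-q(w)} then produces the analog of the system \eqref{c7e2.9}--\eqref{c7e2.12}.

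From the first-order equations I expect $p_1=-q_1$ together with a relation of the form $8(1-\lambda)^2\Theta_2^2 a_2^2=\gamma^2 C_0^2 B_1^2(p_1^2+q_1^2)$, paralleling \eqref{c7e2.13}--\eqref{c7e2.14}. Adding the two second-order equations eliminates $a_3$ and yields $a_2^2$ as a linear combination of $p_2+q_2$ and $p_1^2+q_1^2$; eliminating the latter via the first-order identity gives a closed form for $a_2^2$ whose denominator is precisely $\gamma C_0 B_1^2[2(1-\lambda)\Theta_3+(\lambda^2-1)\Theta_2^2]-(1-\lambda)^2(B_2-B_1)\Theta_2^2$, and Lemma~\ref{lem-pom} applied to $p_2,q_2$ yields \eqref{bi-th1-b-a2g} after rearranging signs. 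Subtracting the two second-order equations removes the dependence on $p_1^2+q_1^2$ (since $p_1^2=q_1^2$) and expresses $a_3-a_2^2$ in terms of $p_1$ and $p_2-q_2$; plugging in the first-order expression for $a_2^2$ and invoking Lemma~\ref{lem-pom} once more produces \eqref{bi-th1-b-a3g}.

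The principal obstacle is purely bookkeeping: one must correctly identify the cross-term $(\lambda^2-1)\Theta_2^2 a_2^2$ in the $z^2$-coefficient of the S\u{a}l\u{a}gean-type functional, since it is exactly this cross-term that drives the combination $[\gamma C_0(\lambda^2-1)B_1^2 + (1-\lambda)^2(B_1-B_2)]\Theta_2^2$ appearing inside the absolute value in \eqref{bi-th1-b-a2g}. Once that expansion is in hand, the remaining algebra and the invocation of Lemma~\ref{lem-pom} run in complete parallel to the proof of Theorem~\ref{Bi-th1}.
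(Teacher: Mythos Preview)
Your proposal is correct and follows exactly the same route as the paper: set up the quasi-subordination via Schwarz maps and the Carath\'eodory substitutions \eqref{c7e2.5}--\eqref{c7e2.6}, expand the left-hand side to obtain the coefficient system \eqref{th1-ceof-p1}--\eqref{th1-ceof-q2}, and then add/subtract the second-order relations and invoke Lemma~\ref{lem-pom} precisely as in Theorem~\ref{Bi-th1}. Your explicit verification of the cross-term $(\lambda^2-1)\Theta_2^2 a_2^2$ in the $z^2$-coefficient is exactly the piece the paper records in \eqref{th1-ceof-p2} and \eqref{th1-ceof-q2} before declaring ``Proceeding as in Theorem~\ref{Bi-th1} we get the desired results.''
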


\begin{proof}
It follows from (\ref{Defi-1-e1}) and (\ref{Defi-1-e2}) that
\begin{equation}\label{bi-th1-pr-e1}
\frac{1}{\gamma} \left(
\frac{z (\mathcal{J}_\mu^b f(z))'}{(1-\lambda)\mathcal{J}_\mu^b f(z)+\lambda z (\mathcal{J}_\mu^b f(z))'} -1 \right ) =\psi(z)[ \phi(u(z))-1]
\end{equation}
and
\begin{equation}\label{bi-th1-pr-e2}
\frac{1}{\gamma} \left (
\frac{w (\mathcal{J}_\mu^b g(w))'}{(1-\lambda)\mathcal{J}_\mu^b g(w)+\lambda z (\mathcal{J}_\mu^b g(w))'}-1 \right ) = \psi(w)[\phi(v(w))-1],
\end{equation}
where $\psi[\phi(u(z))-1]$ and $\psi[\phi(v(w))-1]$ are given in (\ref{Exp-p(z)}) and (\ref{Exp-q(w)}) respectively.
 \par Now, equating the coefficients in (\ref{bi-th1-pr-e1}) and
(\ref{bi-th1-pr-e2}), we get
\begin{equation}\label{th1-ceof-p1}
\frac{(1-\lambda)}{\gamma}\Theta_2 a_2 = \frac{1}{2} C_0B_{1} p_{1},
\end{equation}
\begin{equation}\label{th1-ceof-p2}
\frac{(\lambda^2-1)}{\gamma}\Theta_2^2 a_2^2+\frac{2(1-\lambda)}{\gamma}\Theta_3 a_3 =
\frac{1}{2} C_1~B_1p_{1}  + \frac{1}{2}C_0B_1\left(p_{2} - \frac{p_{1}^2}{2}\right)+\frac{C_0B_2}{4}p_1^2,
\end{equation}
\begin{equation}\label{th1-ceof-q1}
-\frac{(1-\lambda)}{\gamma}\Theta_2 a_2 = \frac{1}{2}C_0 B_{1} q_{1}
\end{equation}
and
\begin{equation}\label{th1-ceof-q2}
\frac{(\lambda^2-1)}{\gamma}\Theta_2^2 a_2^2 +\frac{2(1-\lambda)}{\gamma}\Theta_3
(2a_2^2-a_3) = \frac{1}{2} C_1~B_1q_{1}  + \frac{1}{2}C_0B_1\left(q_{2} - \frac{q_{1}^2}{2}\right)+\frac{C_0B_2}{4}q_1^2.
\end{equation}
Proceeding as in Theorem \ref{Bi-th1} we get the desired results.

\end{proof}
Note that by taking $\lambda=0$ we get the result as in Corollary \ref{sss1}

\section{Concluding Remark}
\par For the class of strongly starlike functions, the function $\phi$ is given by
\begin{equation}\label{phi01}
\phi(z) = \left( \frac{1 + z}{1 - z} \right)^{\alpha} = 1 + 2 \alpha z + 2 \alpha^2 z^2 + \cdots\quad
(0 < \alpha \leq 1),
\end{equation}
which gives $$B_{1} = 2 \alpha\,\,\,\,\, {\rm and }\,\,\,\, B_{2} = 2 \alpha^2.$$
\par On the other hand For $-1 \leq B \leq A < 1$ if we take
\begin{equation}  \label{ch5-phi03}
\phi(z) = \frac{1 + A z}{1 + Bz} = 1 + (A-B) z -B(A-B) z^2 + B^2(A-B) z^3 +
\cdots.
\end{equation}
then  we have
 $$B_{1} = (A-B), ~B_{2} = -B(A-B).$$
 \par By taking, $A=(1 - 2 \beta)$ where $0 \leq \beta < 1$ ~and $B=-1$ in (\ref{ch5-phi03}), we get,
\begin{eqnarray}
\phi(z) &=& \frac{1 + (1 - 2 \beta) z}{1 - z} \nonumber\\
&=& 1 + 2 (1 - \beta) z + 2 (1 -
\beta) z^2 + 2 (1 - \beta) z^3 + \cdots .\label{ch5-phi02}
\end{eqnarray}
Hence, we have $$B_{1} = B_{2} = 2(1 - \beta).$$
\\Further, by taking $\beta = 0,$ in (\ref{ch5-phi02}), we get,
\begin{equation}  \label{ch5-phi04}
\phi(z) = \frac{1 + z}{1 - z} = 1 + 2 z + 2 z^2 + 2 z^3 + \cdots\quad,
\end{equation}
Hence, $$B_{1} = B_{2} = B_{3} = 2.$$ \par Various Choices of $\phi$  as mentioned above and suitably choosing the values of $B_1$ and $B_2,$ we state some interesting results analogous to Theorem \ref{Bi-th1} , Theorem \ref{bi-th1-b-a2g} and the Corollaries \ref{sss1} to \ref{mmm1}.
%=============================================================================================================================
\par It is of interest to note that, if $\mu = 1, b = \nu \ (\nu > -1)$ the operator $\mathcal{J}_\mu^b$ turns into
Libera-Bernardi integral operator $\mathcal{L}_{\nu}$ and if $\mu = \sigma (\sigma > 0), \
b = 1$ the operator $\mathcal{J}_\mu^b$ turns into Jung-Kim-Srivastava integral operator
$\mathcal{I}_{\sigma}.$ So, various other interesting corollaries and consequences of
our main results (which are asserted by Theorem \ref{Bi-th1} and \ref{bi-th1-b-a2g} above) can be derived similarly.
If $\mathcal{J}_\mu^b$ is the identity map and $\psi(z)=1$  yields some known results stated in \cite{BAF-MKA,haya,Li-Wang,Xu-HMS-AML,HMS-AKM-PG}.The details involved may be left as an exercise for the interested reader.


\begin{thebibliography}{0}
\bibitem{Ali}
R.M. Ali, S.K. Lee, V. Ravichandran and S. Supramanian, Coefficient estimates for bi-univalent
Ma-Minda starlike and convex functions, {\it Appl. Math. Lett.} {\bf 25} (2012), 344--351.

\bibitem{Branna1970}
D. A. Brannan, J. Clunie and W. E. Kirwan, Coefficient estimates for a class of star-like functions,
{\it Canad. J. Math.} {\bf 22} (1970), 476--485.

\bibitem{Bran-1979}
D.A. Brannan and J.G. Clunie (Editors), {\it Aspects of Contemporary Complex Analysis}, Academic Press,
London, 1980.

\bibitem{Bran1985}
D. A. Brannan and T. S. Taha, On some classes of bi-univalent functions, {\it Studia Univ. Babe\c
s-Bolyai Math.} {\bf 31} (2) (1986), 70--77.

\bibitem{choi}
J. Choi and H.M. Srivastava, Certain families of sries associated with the Hurwitz-Lerch Zeta function,
{\it Appl. Math. Comput.,} {\bf 170} (2005), 399--409.

\bibitem{DEN}
E. Deniz, Certain subclasses of bi-univalent functions satisfying subordinate conditions, {\it Journal
of Classical Analysis} {\bf 2}(1) (2013), 49--60.

\bibitem{Ferr}
C. Ferreira and J.L. Lopez, Asymptotic expansions of the Hurwitz-Lerch Zeta function, {\it J. Math. Anal.
Appl.,} {\bf 298} (2004), 210--224.

\bibitem{Flet}
T.M. Flett, The dual of an inequality of Hardy and Littlewood and some related inequalities,
{\it J. Math. Anal. Appl.,} {\bf 38} (1972), 746--765.

\bibitem{BAF-MKA}
B. A. Frasin and M. K. Aouf, New subclasses of bi-univalent functions, {\it Appl. Math. Lett.}
{\bf 24} (2011), 1569--1573.

\bibitem{Garg}
M. Garg, K. Jain and H. M. Srivastava, Some relationships between the generalized Apostol-Bernoulli
polynomials and Hurwitz-Lerch Zeta functions, {\it Integral Transforms Spec. Funct.,} {\bf 17} (2006),
803--815.

\bibitem{haya}
T. Hayami and S. Owa, Coefficient bounds for bi-univalent functions, {\it Pan Amer. Math. J.}
{\bf 22} (4) (2012), 15--26.

\bibitem{Noor}
K. Inayat Noor, On Bazilevi\u{c} functions of complex order, {\it Nihonkai Math. J.,} {\bf 3}
(1992), 115--124.

\bibitem{Jung}
I. B. Jung, Y. C. Kim and H. M. Srivastava, The Hardy space of analytic functions associated with
certain one-parameter families of integral operators, {\it J. Math. Anal. Appl.,} {\bf 176} (1993),
138--147.


\bibitem{Lewin}
M. Lewin, On a coefficient problem for bi-univalent functions, {\it Proc. Amer. Math. Soc.}
{\bf 18} (1967), 63--68.

%\bibitem{Li-Wang}
%X.-F. Li and A.-P. Wang, Two new subclasses of bi-univalent functions, {\it Internat. Math. Forum}{\bf 7} (2012), 1495--1504.

%\bibitem{Lin}S.-D. Lin and H. M. Srivastava, Some families of the Hurwitz-Lerch Zeta functions and associated fractional
%derivative and other integral representations, {\it Appl. Math. comput.,} {\bf 154} (2004), 725--733.

\bibitem{Lin2}
S.-D. Lin, H.M.Srivastava and P.-Y. Wang, Some espansion formulas for a class of generalized Hurwitz-Lerch
Zeta functions, {\it Integral Transforms Spec. Funct.,} {\bf 17} (2006), 817--827.

%\bibitem{ma}    W.C. Ma, D. Minda, A unified treatment of some special classes of functions,
%in: Proceedings of the Conference on Complex Analysis, Tianjin, 1992, 157 - 169, Conf. Proc.Lecture Notes Anal. 1. Int. Press, Cambridge, MA, 1994.
\bibitem{GMS-TJ}G.Murugusundaramoorthy and T.Janani,
Bi-starlike function of complex order associated
with hypergeometric functions,
  {\it Miskolc Math. Notes.,} {\bf 16},No.1, (2015), 305–319 .
\bibitem{Xu-HMS-AML}G.Murugusundaramoorthy ,Subclasses of bi-univalent functions of complex order
based on subordination conditions involving Wright hypergeometric functions {\it J. Math. Fund. Sci.} Vol. 47, No. 1, 2015, 60--75
\bibitem{Netany}
E. Netanyahu, The minimal distance of the image boundary from the origin and the second coefficient
of a univalent function in $z<1$, {\it Arch. Rational Mech. Anal.} {\bf 32} (1969), 100--112.

\bibitem{Pom}
C. Pommerenke, {\it Univalent Functions}, Vandenhoeck \&\ Ruprecht, G\"ottingen, 1975.

\bibitem{Praj}
J.K. Prajapat and S. P. Goyal, Applications of Srivastava-Attiya operator to the classes of strongly
starlike and strongly convex functions, {\it J. Math. Inequal.,} {\bf 3} (2009), 129--137.

\bibitem{Radu}
D. R\v{a}ducanu and H. M. Srivastava, A new class of analytic functions defined by means of a
convolution operator involving the Hurwitz-Lerch Zeta function, {\it Integral Transforms Spec.
funct.,} {\bf 18} (2007), 933--943

\bibitem{Reddy}
G.L. Reddy and K.S. Padmanaban, On analytic functions with reference to the Bernardi integral
operator, {\it Bull. Austral. Math. Soc.,} {\bf 25} (1982), 387--396.

\bibitem{sri3}
H.M. Srivastava and A. Attiya, An integral operator associated with the Hurwitz-Lerch Zeta function
and differential subordination, {\it Integral Transforms Spec. funct.,} {\bf 18} (2007), 207--216.

\bibitem{sri4}
H.M. Srivastava and J. Choi, Series associated with the Zeta and reated fubctions, Dordrecht, Boston,
London, Kluwer Academic Publishers, 2001.

\bibitem{HMS-AKM-PG}
H. M. Srivastava, A. K. Mishra and P. Gochhayat, Certain subclasses of analytic and bi-univalent
functions, {\it Appl. Math. Lett.} {\bf 23} (2010), 1188--1192.

\bibitem{Taha1981}
T. S. Taha, {\it Topics in Univalent Function Theory}, Ph.D. Thesis, University of London, 1981.




\end{thebibliography}
\end{document}